\title[Regularity for nonlinear free boundary problems]{A geometric approach to regularity for nonlinear free boundary problems with finite Morse index}
\author{Aram L. Karakhanyan}
\address{School of Mathematics, The University of Edinburgh, Peter Tait Guthrie Road, EH9 3FD, Edinburgh, UK}
\email{ aram6k@gmail.com}
\renewenvironment{proof}[1][\proofname ]{{\noindent \bfseries #1. }}{\qed \bigskip }
\newcommand{\R}{{\mathbb R}}
\renewcommand{\P}{{\mathcal P}}
\renewcommand\L{\mathscr L}
\newcommand\po[1]{ \{ {#1} > 0 \}}
\newcommand\dist{\operatorname{dist}} 
\newcommand{\e}{\varepsilon}
\newcommand{\supp}{\operatorname{supp}}
\renewcommand{\div}{\mathrm{div}}
\newcommand{\Index}{\mathrm{Index}}
\renewcommand\H{\mathcal H}
\newcommand\p{\partial}
\newcommand\fb[1]{\p \{ {#1} > 0 \}}
\newcommand\fbr[1]{\p_{\rm red} \{ {#1} > 0 \}}
\def\Om{\Omega}
\def\na{\nabla}
\newcommand\I[1]{\chi_{\{#1>0\}}}  
\newtheorem{theorem}{Theorem}[section]
\newtheorem{cor}[theorem]{Corollary}
\newtheorem{defn}{Definition}[section]
\newtheorem{lem}[theorem]{Lemma}
\newtheorem{prop}[theorem]{Proposition}
\newtheorem{remark}[theorem]{Remark}
\newtheoremstyle{named}{}{}{\itshape}{}{\bfseries}{.}{.5em}{\thmnote{#3 }#1}
\theoremstyle{named}
\numberwithin{equation}{section}
\thanks{2000 Mathematics Subject Classification. Primary 35R35, 	35J60.
\\ Keywords: Free boundary regularity, Morse index, global solutions, unique continuation.}
\begin{document}
\maketitle

\baselineskip=13pt    

\begin{abstract}

Let $u$ be a weak solution of the free boundary 
problem 
$$\L u=\lambda_0 \H^1\with\fbr{u}, u\ge 0,$$  where 
$\L u=\div(g(\na u)\na u)$ is a quasilinear elliptic operator and $g(\xi)$ is a given  function of $\xi$ satisfying 
some  structural conditions.  We prove that the free boundary $\fb u$ 
is continuously differentiable in $\R^2$, provided that $\po u$ has locally finite connectivity. 
Moreover, we show that the free boundaries  of weak solutions with finite {\it Morse index} must have finite connectivity. 
The weak solutions are locally Lipschitz continuous and non-degenerate stationary points of the Alt-Caffarelli type functional $J[u]=\int_{\Om}F(\na u)+Q^2\I u$.

The full regularity of  the free boundary is not fully understood even for the {\it minimizers} of $J[u]$ in the 
simplest case $g(\xi)=|\xi|^{p-2}, p>1$, partly because the methods
from the classical case $p=2$ cannot be generalized to the full range of $p$. 
Our method, however,  is very geometric and works even for the {\it stationary points} of the functional $J[u]$ for 
a large class of nonlinearities $F$.

\end{abstract}

\section{Introduction}
In this paper we study the weak solutions of the free boundary problem
\begin{equation}\label{pde-0}
\left\{
\begin{array}{lll}
\L u=0 &\hbox{in}\ B_r(x_0),\\
|\na u|=\ell &\hbox{on}\ \fb u\cap B_r(x_0),\\
u\ge 0&\hbox{in} \ B_r(x_0),\\
\end{array}
\right.
\end{equation}
where $\L u=\div(\rho(\na u)\na u)$ is a quasilinear elliptic operator,  $B_{r}(x_0)=\{x\in \R^2 : |x-x_0|<r\}$,  $\rho(\xi)$ is a given function of $\xi\in \R^2$ subject to some 
standard structural conditions, $\ell$ is a constant. 
The  solutions of \eqref{pde-0} can be seen as stationary points of the functional 
\begin{equation}
J[u]=\int_\Omega F(|\na u|)+Q^2\I u, \quad u\in \mathcal A=\{v\in W^{1, F}(\Omega), v-u_0\in W^{1,F}(\Omega)\}, 
\end{equation}
where $W^{1, F}(\Omega)$ is the Orlicz-Sobolev space of function defined on a Lipschitz domain
$\Omega\subset \R^2$, $u_0\in W^{1, F}(\Omega)$ is a given boundary condition, 
$\chi_D$ is the characterisitic function of a set $D$, 
and $\ell$ is determined from the implicit relation  
\[F'(\ell)\ell-F(\ell)=Q^2,\]
where $Q>0$ determines  the Bernoulli condition along the free boundary.
One can write \eqref{pde-0} in a more concise form 

\begin{equation}\label{pde-weak}
\L u=\lambda_0\H^1\with\fbr u, 
\end{equation}
where $\fbr u$ is the reduced boundary of the set $\po u$, $\H^s$ is the $s-$dimensional Hausdorff measure, and $\lambda_0=F'(\ell)$. Note that the latter relation can be 
used to recover the free boundary condition $|\na u|=\ell$ from the equation $\L u=\lambda_0\H^1\with\fbr u$.
We give the precise definition below, which is  valid in $\R^n, n\ge 2.$

\begin{defn}\label{weak-def}
A function  $u$ is said to be a weak solution of 
$\L u=\lambda_0 \H^{n-1}  \with \fbr u$ in a domain $\Omega\subset \R^n$
if the following is satisfied:
\begin{itemize}
 \item [\rm{1)}] $u\in W^{1, F}(\Omega)$
is continuous and non-negative in $\Omega$ and $\L u=0$ in $\{u>0\}$,
\item [\rm{2)}] for every bounded subdomain $D\Subset\Omega $ there are constants $0<c_{\min}\leq C_{\max} <\infty$ depending only on $n, \ell, \rho, \dist(\partial D, \Omega)$ such that for
every ball $B_r(x)\subset D$ centered at  free boundary point $x\in \partial \{u>0\}$ the following inequalities hold
$$c_{\min}\leq \frac 1 {r^n}\int_{\partial B_r(x)}u\leq C_{\max},$$
\item[\rm{3)}] $\{u>0\}$ is a set of locally finite perimeter  and 
$\L u=\lambda_0\H^{n-1}\with\partial_{\rm{red}}\{u>0\}$ in the following sense:
for test function
$\zeta \in C_0^\infty(\Omega)$ the equality
$$ -\int_\Omega\rho(\na u)\na u\na\zeta=
\lambda_0\int_{\partial_{\rm{red}}\{u>0\}}\zeta d\H^{n-1},
$$
holds. Here $\partial_{\rm red}\{u>0\}$ is the reduced boundary of $\{u>0\}$, see 4.5.5 \cite{Federer} for definition.
\end{itemize}

\end{defn}

\begin{remark}\label{weak-props}
In \cite{Wolan-adv}  a "flatness implies regularity`` type result is proven
for weak solutions. 
If $x_0\in\partial_{\rm red}\{u>0\}$ then
the free boundary near $x_0$ is a smooth surface. Hence, the free boundary 
condition $|\na u|=\ell$ is satisfied in the classical sense. 
Furthermore, the weak solutions enjoy the following properties:

\begin{itemize}
\item[$\bf1^\circ$] $\fb u$ is a set of locally finite perimeter and $\partial_{\rm red}\{u>0\}$ is open relative to $\partial\{u>0\}$,
\item[$\bf2^\circ$] $\partial_{\rm red}\{u>0\}$
is smooth, and 
$\H^{n-1}(\partial\{u>0\}\setminus \partial_{\rm red}\{u>0\})=0$, 
\item[$\bf3^\circ$] 
the gradient is upper-semicontinuous, i.e.
\[\limsup_{\begin{subarray}{c}{}x\to x_0 \\ x\in\po{u}\end{subarray}}|\na u(x)|=\ell.\]

\end{itemize}
\end{remark}

Our first result states that 
the finitely connected  free boundaries of weak solutions are smooth in 
$\R^2$. To elucidate  our method  we   first choose to formulate the result for
$\L u=\div(|\na u|^{p-2}\na u)$. 
\begin{theorem}\label{thm-1}
Let $\L=\Delta_p$ be the $p-$laplacian, i.e. $\rho(\xi)=|\xi|^{p-2}, F(\xi)=|\xi|^p, 1<p<\infty$ and $u$
be a weak solution of \eqref{pde-0} in $B_1$, the unit ball centered at the origin,  in the sense of Definition \ref{weak-def}. Suppose that there is a constant $\theta>0$ such that 
\begin{equation}\label{Lebeg-0}
 \frac{|\{u=0\}\cap B_r(x)|}{|B_r(x)|}\geq \theta>0, \qquad \forall B_r(x)\subset B_1, x\in{ \fb u}\cap B_1.
 \end{equation}
 If $\fb u$ has finite connectivity then it is a finite collection of continuously differentiable curves in $B_1$.
\end{theorem}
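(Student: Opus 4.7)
The plan is to reduce Theorem~\ref{thm-1} to Wolanski's flatness-implies-regularity result cited in Remark~\ref{weak-props}, which guarantees smoothness of $\fb u$ near any reduced boundary point. It therefore suffices to prove that, under the density condition~\eqref{Lebeg-0}, every $x_0\in\fb u\cap B_1$ behaves like a reduced boundary point: every blow-up of $u$ at $x_0$ is a half-plane solution $u_\infty(x)=\ell\,(x\cdot\nu)^+$ for some $\nu\in\S^1$, and the direction $\nu=\nu(x_0)$ depends continuously on $x_0$.

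First I would record a standard Lipschitz estimate for $u$ in $B_1$, which follows from part~(2) of Definition~\ref{weak-def} combined with interior regularity of $p$-harmonic functions in $\po u$. This makes the rescalings $u_k(x)=u(x_0+r_kx)/r_k$ precompact as $r_k\downarrow 0$. Along a subsequence, $u_k\to u_\infty$ locally uniformly on $\R^2$; each clause of Definition~\ref{weak-def} passes to the limit, as does the density hypothesis~\eqref{Lebeg-0}. Hence $u_\infty$ is a \emph{global} weak solution in $\R^2$ which still satisfies~\eqref{Lebeg-0} at every point of $\fb{u_\infty}$.

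The heart of the argument is then the planar classification: every such global solution must equal $\ell\,(x\cdot\nu)^+$ for some $\nu\in\S^1$. This step is the one that replaces the Alt--Caffarelli / Weiss monotonicity machinery, which is unavailable for $\Delta_p$ when $p\neq 2$. The strategy is to exploit features peculiar to $n=2$: the free boundary of $u_\infty$ has locally finite $\H^1$-measure (property~$\bf 1^\circ$ of Remark~\ref{weak-props}), the level curves of $u_\infty$ inside $\po{u_\infty}$ are real-analytic arcs with controlled branching, and the density condition~\eqref{Lebeg-0} prevents $\{u_\infty=0\}$ from collapsing into cusps or narrow sectors. Combined with the free boundary gradient identity $|\na u_\infty|=\ell$ read off from property~$\bf 3^\circ$ of Remark~\ref{weak-props}, these planar rigidity facts force $\po{u_\infty}$ to be a half-plane and $u_\infty$ to be its unique linear extension.

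Once the classification is established, the rescaling $x\mapsto u(x_0+rx)/r$ is uniformly close to some half-plane solution at every sufficiently small scale~$r$. This furnishes the $\varepsilon$-flatness hypothesis of the result cited in Remark~\ref{weak-props} at the point $x_0$, so $\fb u$ admits a $C^1$ parametrization in a neighbourhood of $x_0$; the normal $\nu(x_0)$ obtained this way is unique by the sharpness of the classification and depends continuously on $x_0$ because flat blow-ups vary continuously with the base point. Varying $x_0$ over $\fb u\cap B_1$ concludes that $\fb u$ is a continuously differentiable curve. The main obstacle is precisely the planar classification of global solutions: replacing the monotonicity-formula arguments of the case $p=2$ by a genuinely geometric argument driven by the density condition and the rigidity of planar $p$-harmonic level sets is the key innovation that makes the whole scheme go through.
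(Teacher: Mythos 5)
Your overall scaffolding (blow-up at a free boundary point, compactness, passage of Definition~\ref{weak-def} and of \eqref{Lebeg-0} to the limit, then conclude via the flatness-implies-regularity theorem of Mart\'inez--Wolanski) coincides with the paper's, and the reduction itself is sound. But the step you yourself call the heart of the matter --- that every global weak solution in $\R^2$ satisfying the density condition is a half-plane solution $\ell\,(x\cdot\nu)^+$, or at least that the blow-up limit is flat at the origin --- is asserted rather than proved. Phrases such as ``planar rigidity facts,'' ``level curves are real-analytic arcs with controlled branching,'' and ``the density condition prevents cusps'' name desiderata, not arguments: nothing in the proposal explains \emph{why} the positivity set of the blow-up must be a half-plane, and no mechanism is offered that replaces the monotonicity-formula machinery you correctly note is unavailable for $p\neq 2$. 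Since the entire content of the theorem lives in this step, the proposal as written is a plan with the central lemma missing.

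For comparison, the paper fills this gap with three concrete devices. First, for a smooth arc $S\subset\fbr{u_0}$ and a competitor arc $S'\subset\po{u_0}$ with the same endpoints, integrating $\L u_0=0$ over the enclosed region and using $|\na u_0|=\ell$ on $S$ together with $|\na u_0|\le\ell$ everywhere (Remark~\ref{weak-props}~$\bf 3^\circ$) gives $\H^1(S)\le\H^1(S')$; a one-sided first variation of arc length then shows every free boundary arc is convex toward $\po{u_0}$. Second, at a relative endpoint of such a convex arc one blows up again, obtains a free boundary containing a ray, extends the solution linearly across it, and invokes unique continuation for the $p$-Laplacian to conclude the second blow-up is $\ell x_1^+$; hence convex arcs have no endpoints and $\fb{u_0}$ is a complete smooth convex curve, which yields the flatness needed at the base point. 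Third, a separate lemma rules out ``thinning'' (accumulation of closed convex components) by a barrier construction from radial profiles of the $p$-Laplacian, Perron's method, $p$-harmonic continuation across the curve, and unique continuation. Note also that the paper does not need (and does not prove) your stronger Liouville-type classification of all global solutions: flatness of $\fb{u_0}$ at the single blow-up point, transferred to $u_k$ by Hausdorff convergence, already suffices for Theorem~9.3--9.4 of \cite{Wolan-adv}. If you want to complete your proposal you would need to supply arguments of this kind (or a genuine proof of your classification claim); at present the decisive step is missing.
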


One can show that the finite Morse index free boundaries  with nonnegative generalized curvature must be of  finite connectivity.

\begin{theorem} \label{thm-2}
 Let $u$ be as in Theorem \ref{thm-1}. If $p\ge 2$ and the  curvature measure $\kappa$ of $\fb u$ exists and it is nonnegative then the finite Morse index solutions have locally finitely connected free boundaries.  
\end{theorem}

The Morse index of some $x_0\in \fb u$ is defined in standard way, that is, it is the number $m$ of negative eigenvalues $\theta_1, \dots, \theta_m$ 
of the problem $-\div(A\na \phi)-\kappa\phi \H^1\with\fbr u=\theta |\na u|^{p-2} \phi$ in $\R^2\setminus \{x_0\}$
where $A_{ij}=|\na u|^{p-2}(\delta_{ij}+(p-2)u_iu_j|\na u|^{-2})$. This eigenvalue problem arises naturally when one computes the 
second variation of the energy \cite{CJK}, \cite{K18}. The zero Morse index solutions are the minimizers, see Section \ref{sec:3} for precise definitions.

\section{The curvature of free boundary}
The proof of Theorem \ref{thm-1} is a combination of two lemmas to follow. In the first one
we show that any component of $\fb u$ with positive $\H^1$ measure must be smooth.

\begin{lem}\label{lem-ddt}
Let $u$ be as in Theorem \ref{thm-1} and $\gamma\subset \fb u$ a component of free boundary such that 
$\H^1(\gamma)>0$. Then $\gamma$ is convex and smooth. 
\end{lem}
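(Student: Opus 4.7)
My plan is to establish the inclusion $\gamma\subset\fbr u$; combined with $\bf 2^\circ$ of Remark~\ref{weak-props}, this immediately yields that $\gamma$ is locally a smooth curve. Split $\gamma=\gamma_r\cup\gamma_s$, where $\gamma_r=\gamma\cap\fbr u$ and $\gamma_s=\gamma\setminus\fbr u$. Since $\H^1(\fb u\setminus\fbr u)=0$, one has $\H^1(\gamma_s)=0$ and hence $\H^1(\gamma_r)=\H^1(\gamma)>0$; in particular $\gamma_r$ is nonempty and near each of its points $\gamma$ contains a genuine $C^{1,\alpha}$ arc. The whole task thus reduces to proving $\gamma_s=\emptyset$.

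I would argue by contradiction. Fix $x_0\in\gamma_s$ and carry out a blow-up $u_k(x)=u(x_0+r_kx)/r_k$ with $r_k\downarrow 0$. The Lipschitz regularity and non-degeneracy built into condition (2) of Definition~\ref{weak-def} supply, along a subsequence, locally uniform convergence $u_k\to u_*$ to a nontrivial global weak solution in $\R^2$, with Hausdorff convergence of the free boundaries. The density hypothesis \eqref{Lebeg-0} passes to the limit to give positive density of $\{u_*=0\}$ at the origin, while the upper-semicontinuity $\bf 3^\circ$ transfers as the global gradient bound $|\na u_*|\le\ell$ in $\{u_*>0\}$, attained on $\fbr{u_*}$.

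The decisive step is a planar classification: any such global solution $u_*$ in $\R^2$ must be the half-plane solution $u_*(x)=\ell\,(x\cdot\nu)^+$ for some unit $\nu\in\S^1$. Granting this, for large $k$ the rescalings $u_k$ are $\e$-flat at the origin, so Wolanski's flatness-implies-regularity theorem described in Remark~\ref{weak-props} applies to $u$ at $x_0$; it forces $x_0\in\fbr u$, contradicting $x_0\in\gamma_s$ and completing the proof.

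The main obstacle is the planar classification itself. For $p=2$ it follows from the Alt--Caffarelli--Friedman monotonicity formula, but no such formula is known for a general quasilinear $\L$. This is where the promised geometric method must do the work: I would exploit the 2D topology of $\{u_*>0\}$ (each component is an open set of locally finite perimeter bounded by smooth arcs), the global bound $|\na u_*|\le\ell$ attained on $\fbr{u_*}$, and the positive density of $\{u_*=0\}$ at $0$, in a Liouville/rigidity argument for $p$-harmonic functions on unbounded planar domains—ruling out multi-phase pictures and forcing each component of $\{u_*>0\}$ to be a half-plane with $u_*$ linear on it. This planar rigidity should be the technically heaviest step; the blow-up compactness and measure-theoretic reductions above are routine by comparison.
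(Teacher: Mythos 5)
Your measure-theoretic reduction (split $\gamma$ into $\gamma_r=\gamma\cap\fbr u$ and $\gamma_s$, note $\H^1(\gamma_s)=0$ by Remark \ref{weak-props} $\bf 2^\circ$, and aim to show $\gamma_s=\emptyset$) and the blow-up compactness step are sound and match the paper's use of Proposition \ref{prop-closed} and of \eqref{Lebeg-0}. But the proposal has a genuine gap precisely at what you yourself call the decisive step: the claim that every planar blow-up limit $u_*$ must be the half-plane solution $\ell\,(x\cdot\nu)^+$. You do not prove this; you explicitly note that the ACF monotonicity formula is unavailable for general quasilinear $\L$ and then only sketch an intention (``a Liouville/rigidity argument exploiting 2D topology and the bound $|\na u_*|\le\ell$''). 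That classification is the entire content of the lemma for nonlinear $\L$ — without it the contradiction at $x_0\in\gamma_s$ never launches — so what you have is a correct reduction, not a proof.

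For comparison, the paper fills exactly this hole with a geometric argument that also shows a full Liouville classification is not needed. For the blow-up $u_0$, integrate $\L u_0=0$ over the region bounded by a smooth arc $S\subset\fbr{u_0}$ and a competitor arc $S'\subset\po{u_0}$ with the same endpoints; using $|\na u_0|=\ell$ on $S$ and $|\na u_0|\le\ell$ globally (Remark \ref{weak-props} $\bf 3^\circ$) this gives $\H^1(S)\le\H^1(S')$, i.e. $S$ is a one-sided length minimizer, whence the first variation of arclength has a sign and every rectifiable piece of $\fb{u_0}$ is convex as seen from $\po{u_0}$. A second blow-up at a putative relative endpoint of such a convex arc produces a solution whose free boundary contains a ray; extending it linearly across that ray and invoking unique continuation \cite{GM} forces the half-plane profile there, so the convex arc has no endpoint and $\fb{u_0}$ is a complete smooth convex curve. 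Local smoothness of $\fb{u_0}$ then traps $\fb{u_k}$ in a thin slab at a fixed small scale, and Theorems 9.3--9.4 of \cite{Wolan-adv} give differentiability of $\fb u$ at $x_0$ (the closed-convex-curve scenario is dealt with separately in the next lemma). To make your proposal complete you would need to supply an argument of this type, or some substitute for the missing monotonicity formula; as written, the heaviest step is asserted rather than established.
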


\begin{proof}
We employ a compactness argument to show 
that  at every point of $\gamma$ the blow-up limit $u_0$  exists and $u_0$ is a weak solution thanks to  condition \eqref{Lebeg-0}. 
Consider $u_k(x)=\frac{u(x_0+r_k x)}{r_k}$ for some positive sequence $r_k\downarrow 0$ with $x_0\in\gamma\subset \partial\{u>0\}$.
From  the Lipschitz continuity of $u$ \cite{DP-2D, Wolan}, 
it follows that $\{u_k\}$ is locally uniformly Lipschitz.
 By a customary compactness argument there exists a subsequence, still denoted  $\{u_k\}$,  
converging to a limit $u_0\in W^{1, \infty}_{\rm loc}(\R^2)$
such that
  \begin{eqnarray}\label{HD0}
   u_k\rightarrow u_0 \qquad \textrm{ strongly in} \ W^{1, p}_{\rm loc}(\R^2) \textrm{ and } C^\alpha_{\rm loc}(\R^n), \forall \alpha\in (0,1) \
\textrm{as}\ k\rightarrow \infty,\\\label{HD}
   \partial \{ u_k >0 \} \rightarrow \partial \{ u_0 >0 \}\qquad \textrm{in Hausdorff distance}\  d_\H\textrm { locally in}\   \R^2, \\\label{HD1}
   \I{u_k}\rightarrow \I{u_0}\qquad  \textrm{in}\  L^1_{\rm loc}(\R^2).
  \end{eqnarray}
For the proofs of \eqref{HD0}-\eqref{HD1} we refer the reader to 
\cite{DP-2D}, \cite{Wolan-adv}.

Let $u_0$ be a blow-up of $u$ at $x_0\in \gamma$, 
then by Proposition \ref{prop-closed} (see Appendix) $u_0$ is a weak solution. 
Therefore from Remark \ref{weak-props} we have 
$|\na u_0|=\ell$ on $\fbr {u_0}$ and 
\begin{equation}\label{eq:Carol}
|\na u_0(x)|\le \ell, \quad x\in \R^2.
\end{equation} 
Let $S\subset \fbr {u_0}$ be a smooth 
connected curve and $S'\subset \po{u_0}$ a smooth perturbation of $S$ such that 
$S$ and $S'$ have the same endpoints. Consider the domain $D\subset \po{u_0}$ bounded by 
$S$ and $S'$, i.e.  
$\partial D=S\cup S'$.
We have 
\begin{eqnarray*}
0&=&\int_{D}\L u_0=\int_{S}\rho(\na u_0)(\na u_0\cdot \nu)+\int_{S'}\rho(\na u_0)(\na u_0\cdot \nu)\\
&=&-\rho(\ell)\ell\H^1(S)+\int_{S'}\rho(\na u_0)(\na u_0\cdot \nu).
\end{eqnarray*}
Utilizing the estimate  $|\na u_0(x)|\le \ell, x\in \R^2$ (see Remark \ref{weak-props} ${\bf 3^\circ}$) we infer 
\begin{equation}\label{curv}
\H^1(S)\le \H^1(S').
\end{equation}
Since $S$ is smooth we can 
locally paramatrize it as  $x_2=h(x_1), x\in(-\delta, \delta)$ for suitable choice of coordinate axis $x_1, x_2.$
Suppose that $S'$ is given by $x_2=h(x_1-t\psi(x_1))$, where 
$t>0$ is  small and $0\le \psi\in C_0^\infty(-\delta, \delta)$. Then from  \eqref{curv} we have
\begin{eqnarray}\label{blya}
0&\ge& \frac1t\int_{-\delta}^\delta[\sqrt{1+(h')^2}-\sqrt{1+(h'-t\psi')^2}]=\\\nonumber
&=&\int_{-\delta}^\delta\frac{2h'\psi'-t(\psi')^2}{\sqrt{1+(h')^2}+\sqrt{1+(h'-t\psi')^2}}\to\ \ \hbox{as}\ t\to 0 \\\nonumber
&\to&\int_{-\delta}^\delta\frac{h'\psi'}{\sqrt{1+(h ')^2}}.
\end{eqnarray}
Therefore $\frac d{dx_1}\left(\frac{h'}{\sqrt{1+(h')^2}}\right)\ge 0$ on $(-\delta, \delta)$.
Consequently, the outer curvature $\kappa(S)\ge 0$, i.e. $S$ is a convex graph in $x_2$ direction.
Since $u_0$ is a weak solution  then it follows that  $\fb {u_0}\cap B_1$ is rectifiable and therefore 
$\fb {u_0}\cap B_1=\gamma_0\cup\left(\cup_{k=1}^\infty\gamma_k\right)$ such that 
$\gamma_k, k\ge 1$ are differentiable curves and $\H^1(\gamma_0)=0$. 
Moreover,  by \eqref{blya} every $\gamma_k, k\ge 1$ is convex.  
For some fixed $k_0$ let $y_0\in \partial \gamma_{k_0}$, the  relative boundary of $\gamma_{k_0}$.  Observe that $\gamma_{k_0}$ is convex so there is one sided sub-differential at $y_0$ (from the regular side of $\gamma_{k_0}$). If we blow-up 
 $u_{0}$ at $y_0$, then $u_{00}$ is again a weak solution thanks to Proposition \ref{prop-closed}.
 Moreover, at $0\in \fb{ u_{00}}$, the free boundary  contains a line on which $|\na u_{00}|=\ell$.
 Without loss of generality (because $\rho(\xi)$ is rotation invariant)
 we may assume that the positive semiaxis $x_1>0$ is a subset of the free boundary 
 $\fb{u_{00}}$ and $\L u_{00}=0$ in $\po{u_{00}}$.
 Continuing $u_{00}$ linearly across the positive semiaxis $x_1>0$ and letting 
 \[\widetilde u_{00}(x_1, x_2)=
 \left\{
 \begin{array}{lll}
 u_{00}(x_1, x_2) &\ \hbox{if}\ x_1\ge0, \\
 \ell x_1&\ \hbox{if}  \ x_1<0,   
 \end{array}
 \right.
 \] 
we see that $\L \widetilde u_{00}=0$ in some neighborhood of $x_1=0$.
Applying the unique continuation property of $\L$ \cite{GM} we infer that 
$u_{00}(x_1, x_2)=\ell x_1^+$. 
This implies that $u_0$ is flat at $y_0$ and the relative boundary of the convex arc $\gamma_{k_0}$ is empty. In other words, $\fb{u_0}$ is a smooth, convex, and complete curve in $\R^2$. This yields that $u$ is flat at $x_0$. Indeed,  since $\fb {u_0}$ is smooth then at $0\in \fb {u_0}$ we can take $\rho>0$,  small,  such that 
\[\fb{u_0}\cap B_\rho\subset \left\{ -\frac{\bar \sigma_0\rho}{2}<x\cdot e<\frac{\bar \sigma_0\rho}{2}   \right\}\cap B_\rho\]
where $e$ is the normal of $\fb{u_0}$ at $0$ and $\bar \sigma_0$ is the critical flatness constant, see Theorem 9.3 \cite{Wolan-adv}. In other words, $u_0$ belongs to the 
flatness class $F\left (\frac{\bar \sigma_0}2, 1; \infty\right)$ in $e$ direction.
Choose a sequence $r_k\to 0$ as above such that $u_k(x)=\frac{u(x_0+r_kx)}{r_k}\to u_0(x)$
and  \eqref{HD0}-\eqref{HD1} hold. In particular from 
\eqref{HD} it follows that 
\[\fb{u_k}\cap B_\rho\subset \left\{ -\frac{3\bar \sigma_0\rho}{4}<x\cdot e<\frac{3\bar \sigma_0\rho}{4}   \right\}\cap B_\rho\]
or $u_k\in F\left (\frac{3\bar \sigma_0}4, 1; \infty\right)$ for sufficiently large $k$. Applying Theorem 9.3 \cite{Wolan-adv} we get that $B_{\rho/4}\cap \fb {u_k}$ is $C^{1, \alpha}$ surface 
$\alpha>0$ in $e$ direction.
Consequently, pulling back to $u$ and using 
flatness implies regularity result (Theorem 9.4 \cite{Wolan-adv}) we infer that $\fb u$ is differentiable at $x_0$ and hence smooth. 
\end{proof}

Recall that if $K\subset \R^2$ is bounded convex set with non-empty interior then there is an ellipse 
$E\supset K$ of minimal area and center $x_0\in K$ so that 
\[
x_0+\frac12 (E-x_0)\subset K\subset E, 
\]
see \cite{deGuzman} Lemma 2.2 page 139.
\begin{cor}
Let $u$ be as in Theorem \ref{thm-1}. Let $\gamma$ be a closed convex component of $\fb u$ and $a\ge b$ are the semiaxis of John's ellipse
of the convex hull of $\gamma$.
Then there is universal $c_0$ such that 
\[
\frac a{b}\le c_0.
\] 
\end{cor}
\begin{proof}
Suppose the claim fails, then there are convex closed curves $\gamma_k\subset \fb u$ such that 
the semiaxis $a_k\ge b_k$ of John's ellipse $E_k$ satisfy $\frac{a_k}{b_k}\to \infty.$
Let $z_k$ be a free boundary point intersecting  the major semiaxis of $E_k$. Consider 
$u_k=\frac{u(z_k+a_kx)}{a_k}$. Then using a customary compactness argument (as in the proof of Lamma \ref{lem-ddt}) we can extract a 
subsequence such that $u_k$ converges to a weak solution $u_0$ for which \eqref{Lebeg-0} fails.
In view of Proposition \ref{prop-closed} this is a contradiction. 
\end{proof}

\section{Finite Morse index solutions}\label{sec:3}
In this section we prove Theorem \ref{thm-2}.  We show that the finite index free boundary of $u$ is "non-thinning", i.e. it is not possible to 
have $x_0\in \fb u$ and disjoint components $\gamma_k\subset \fb u$ such that $x_0\not \in\gamma_k, k=1, 2, \dots$ but for some  sequence $x_k\in \gamma_k$ we have $x_k\to x_0$.
Obviously, if $\fb u$ is thinning then we can assume that each $\gamma_k$ must be a closed convex curve. Indeed, if 
$u_0$ is a blow up of $u$ at $x_0$ then $\fb {u_0}$ is thinning too and by lemma \ref{lem-ddt}
each nontrivial bounded component of the free boundary is a closed convex smooth curve.

Recall that by Theorem 6.2 \cite{ACF-Q} $\Gamma$ is rectifiable. Then we define the 
rectifiable $1$ varifold $V= v(\Gamma,  \theta)$ (with multiplicity $\theta$) as the 
equivalence class of all pairs $(\widetilde\Gamma, \widetilde \theta)$ such that 
$\widetilde\Gamma$ is $\H^1$ rectifiable, $\H^1(\Gamma\triangle\widetilde \Gamma)=0$ and 
$\theta=\widetilde \theta$ a.e. on $\Gamma\cap \widetilde\Gamma$ \cite{Simon} page 77.
We say that $V$ has bounded first variation if there is a constant $c>0$ such that 
\begin{equation*}
\sup_{X\in C^{1}_0(U), |X|\le 1}\left|\int \div Xd\mu_V\right|\le c.
\end{equation*}

By the Riesz represenation theorem there is a vector measure  $H$ such that 
\begin{equation*}
\int \div X=-\int H\cdot X.
\end{equation*}
$\kappa:=|H|$ is called the curvature measure of the varifold $V$. If $\Gamma$ is smooth then $H$ coincides with the curvature vector.

\begin{defn}
Let $u$ be a weak solution and the curvature measure of $\fb u$ is locally finite. We say that $u$ has finite Morse index at $x_0\in \fb u$ if there is a constant $C_0>0$ such that 
\begin{eqnarray}\label{eq:Morse}
{\displaystyle
\dfrac{\int_{\po u}|\na u|^{p-2}\left(|\na \phi|^2+(p-2)\frac{(\na u\cdot \na \phi)^2}{|\na u|^2}\right)-\int_{\fb u} \kappa \phi^2}{\int_{\po u}|\na u|^{p-2}\phi^2}
\ge -C_0
}
\end{eqnarray}
whenever $\phi\in C_0^{0,1}(\R^2\setminus \{x_0\}).$
Here $\kappa$ is the curvature measure of the free boundary. If $x_0$ has finite index then we write $\Index (x_0)<\infty$.
\end{defn}

The second variation of the energy $J[u]$ is computed in \cite{K18}, where the following stability inequality for the minimizers had been  derived 
\begin{equation}
\int_{\fb u} \kappa \phi^2\le \int_{\po u}|\na u|^{p-2}\left(|\na \phi|^2+(p-2)\frac{(\na u\cdot \na \phi)^2}{|\na u|^2}\right)
\quad \forall \phi\in C_0^{0,1}(\R^2\setminus \{0\}).
\end{equation}
Thus finite Morse index means that there are finitely many negative eigenvalues if one minimizes the ratio in \eqref{eq:Morse}.

\begin{lem}
Let $u$ be as in Theorem \ref{thm-2}. Suppose $0\in \fb u$ and $\Index (0)<\infty$. Then $\fb u$ has finite connectivity near $0.$
\end{lem}
\begin{proof}
If there is a free boundary component joining $0$ with some nonzero point then 
the argument in the proof of Lemma \ref{lem-ddt} shows that $\fb u$ must be a smooth curve near $u$. 
Thus we assume that $\fb u$ is thinning near $0$. Let $r_k=2^{-k}$ and define 
\[
\xi_k(x)=\left\{
\begin{array}{ll}
0 & \textrm{in}\ B_{r_{k+1}},\\
1-\frac1{\log2}\log\frac{r_k}{|x|} & \textrm{in}\  B_{r_k}\setminus B_{r_{k+1}},\\
1 & \textrm{in}\  B_1\setminus B_{r_k},
\end{array}
\right.
\]
and,  set $\phi_k=\eta\xi_k$ where $\eta\in C_0^\infty(B_1)$ such that $0\le \eta\le1$
and $\eta=1$ in $B_{\frac12}$. Utilizing \eqref{eq:Morse} with $\phi=\phi_k$ gives 
\begin{eqnarray}\label{Morse-1}
\int_{\fb u} \kappa \phi_k^2
&\le&
 \int_{\po u}|\na u|^{p-2}\left(|\na \phi_k|^2+(p-2)\frac{(\na u\cdot \na \phi_k)^2}{|\na u|^2}\right)\\\nonumber
 &&+C_0\int_{\po u}|\na u|^{p-2}\phi^2_k\\\nonumber
&\le&
(p-1)\int_{\po u}|\na \phi_k|^2+C_0|B_1|,
\end{eqnarray}
where the last inequality follows from \eqref{eq:Carol}.
On the other hand we get 
\begin{eqnarray}
\int_{\po u}|\na \phi_k|^2
&=&
\int_{B_{r_k}\setminus B_{r_{k+1}}}|\na \phi_k|^2+\int_{B_{\frac12}\setminus B_{r_k}}|\na \phi_k|^2+\int_{B_1\setminus B_{\frac12}}|\na \phi_k|^2\\\nonumber 
&=&
\frac{2\pi }{(\log 2)^2}\int_{2^{-k-1}}^{2^{-k}}\frac{dt}{t}+\int_{B_1\setminus B_{\frac12}}|\na \eta|^2\\\nonumber
&\le&
\frac{2\pi }{\log 2}+C(\eta).
\end{eqnarray}
Hence, returning to \eqref{Morse-1} we get 
\begin{equation}\label{Morse-2}
\int_{\fb u} \kappa \phi_k^2\le \frac{2\pi }{\log 2}+C(\eta)+C_0|B_1|.
\end{equation}
If $\gamma_1, \dots, \gamma_N$ are closed convex components of $\fb u\cap B_{\frac12}$ then from \eqref{Morse-2} (with sufficiently large $k$) we get that 
\[
2\pi N\le \frac{2\pi }{\log 2}+C(\eta)+C_0|B_1|,
\]
where we used $\int_{\gamma_i}\kappa=2\pi$. Thus $N$ is finite and $\po u$ has finite connectivity at $0$. 
\end{proof}

\begin{remark}
There are examples of periodic weak solutions in $\R^2$ as a solution to a stationary problem in hydrodynamics  \cite{Baker},  \cite{Cro}.
In \cite{T} Traizet gave the full classification of smooth free boundaries  with finite connectivity in $\R^2$. In this respect 
Theorem 12 in \cite{T} says that if $\fb u$ has infinite connectivity then it cannot have finite Morse index. 
\end{remark}

\begin{remark}
It is well known that  a minimizer of $J_p[u]=\int_{B_1}|\na u|^p+\lambda\I u$  is also a
weak solution. 
For $\e>0$ small,  let $$ u_\e:=\max\{u-\e\zeta,\,0\},$$ 
where~$\zeta\in C_0^{0,1}(B_\rho(x_0))$, defined as 
\begin{equation*}
\zeta(x)=\left\{
\begin{array}{lll}
0 & \hbox{if}\ |x-x_0|>R,\\
\displaystyle\frac{\log(R/|x-x_0|)}{\log(R/r)}& \hbox{if}\ x\in B_R(x_0)\setminus B_r(x_0),\\
1 & \hbox{if}\ x\in B_r(x_0).
\end{array}
\right.
\end{equation*}
Then in $\R^2$ the comparison $J_p[u]\le J_p[u_\e]$ gives that 
every blow-up of $u$ must have constant gradient provided $p>2$, see \cite{AC}, \cite{ACF-Q}, \cite{DP-2D}.
Clearly, this argument cannot be used if $u$ is merely a stationary point. Moreover, even for the 
minimzers, it does not imply that the free boundary is continuously differentiable 
for the range $1<p<2$, cf \cite{DP-2D}. 
\end{remark}

In closing this section  we construct a sequence of weak solutions such that the measure 
theoretic boundary of its limit is empty, cf. \cite{AC} 5.8. This example shows that 
the condition \eqref{Lebeg-0} is necessary. 
Let us define  
\begin{equation*}
 u_\e(x)=\left\{\begin{array}{lll}
                 x_n-\e  &{\rm{for}}\  x_n\geq \e, \\
0 &{\rm{for}}\  |x_n|\leq \e, \\
\e-x_n & {\rm{for}}\  x_n\leq -\e.
                \end{array}
\right.
\end{equation*}
One can check that $u$ is a weak solution for every $\e>0.$
However, for $\e=0$ this is not true. In this case $\Delta_p u=2\H^{n-1}\with \fb{u}$ and 
$\H^{n-1}(\partial\{u>0\}\setminus \partial_{\rm{red}}\{u>0\})>0$ since $\partial_{\rm{red}}\{u>0\}=\emptyset$,
in other words the normal derivative $\partial_\nu u$ cannot be reconstructed from the free boundary 
data$\ell=2$.

\section{Generalizations}

 One can impose   various assumptions on $\rho$ to guarantee that the elliptic operator  
has nice properties. 
We formulate them in the following three hypotheses: 
\begin{itemize}
\item[{\bf (H1)}] $\L$ is a quasilinear elliptic operator such that the strong maximum principle,  interior $C^{1,\alpha}$ regularity theory, Harnack inequality for non-negative solutions are valid for the weak solutions of $\L u=0$.  Under these conditions it follows that if $u$ is a weak solution of \eqref{pde-0} then 
the gradient is upper semicontinuous, i.e. 
\begin{equation}\label{limsup}
\limsup_{\begin{subarray}{c}{}x\to x_0, \\ x\in\po{u}\end{subarray}}|\na u(x)|=\ell.
\end{equation}

\item[{\bf (H2)}]The  unique continuation  for the weak solutions of $\L u=0$ is valid, i.e.
if $\L u=0$ in $\Omega$ and there is subdomain $\Omega'\subset \Omega$ such that 
$u$ is affine on $\Omega'$ then $u$ is affine in $\Omega.$ 

\item[{\bf (H3)}] The class of weak solutions is closed with respect to blow-up.
\end{itemize}
If $\bf(H1)$-$\bf(H3)$ are valid then one can generalize the 
variational theory from \cite{AC}, \cite{ACF-Q} for a larger class of 
equations 
\begin{equation}\label{pde}
\L u=\div\left(g(|\na u|)\frac{\na u}{|\na u|}\right), \quad \rho(\xi)=\frac{g(|\xi|)}{|\xi|}
\end{equation}
where $g(t)=G'(t)$ and $G$ is a convex function satisfying $0<c < \frac{G'(t)}t < C<\infty$ with implicitly given free boundary conditions 
\[\Psi(\na u)=Q^2\]
where $\Psi$ is determined by $g$, see \cite{Wolan-adv}.

Some examples are as follows:  
\begin{itemize}
\item The classical Alt-Caffarelli functional $J_{\rm AC}[u]=\int_\Om |\na u|^2+Q^2\I u$ \cite{AC}
where $Q(x)>0, x\in \Om$ is a H\"older continuous function bounded away from zero and infinity.
\item The nonlinear version of $J_{\rm AC}$ 
\begin{equation}
\int_\Om F(|\na u|^2)+Q^2\I u 
\end{equation}
with $c_0\le F'(t)\le C_0, 0\le F''(t)\le C_0(1+t)^{-1}$ \cite{ACF-Q}. The weak solutions solve 
$\L u=\lambda_0\H^1\with\fbr u$ with implicitly defined free boundary condition 
\begin{equation}
\Phi(|\na u|^2)=Q^2, \quad \hbox{where}\quad \lambda_0=2\ell F'(\ell^2), |\na u|=\ell, \Phi(s)=2sF'(s)-F(s).
\end{equation}

\item The non-radially symmetric version of $J_{\rm ACF}$, namely 
\[\int_\Om f(\na u)+Q^2\I u\] 
under the assumtion that $p\cdot\na f(p)-f(p)$ is convex \cite{Weiss-95}.
\item The analogue of minimiziation problem for the functional $J_{\rm ACF}$ in the Orlicz-Sobolev 
spaces corresponding to the energy 
$\int_\Om G(|\na u|)+Q^2\I u$ where 
\[
\L u =\div\left(g(|\na u|)\frac{\na u}{|\na u|}\right), \quad g(s)=G'(s)
\]
and the free boundary condition is $|\na u|=\ell$ where $\ell$ is determined from the 
implicit relation  $G'(\ell)\ell-G(\ell)=Q^2$ \cite{Wolan-adv}. The weak equation is 
$\L u=\lambda_0\H^1\with\fbr{u}$ and 
$\lambda_0=g(\ell)$.
\item The $p(x)$-Laplacian model and the variable growth functional 
\[J_{p(\cdot)}[u]=\int_\Om\frac{|\na u|^{p(x)}}{p(x)}+Q^2\I u, \quad \ell:=\left(\frac{p(x)}{p(x)-1}Q^2\right)^{\frac1{p(x)}}\]
modelling the stationary flow of electrorheological fluids. 
In this case the free boundary condition is 
$\ell:=|\na u|$ and the differential operator $\L u:=\Delta_{p(x)}u=\div(|\na u|^{p(x)-2}\na u)$ 
with $\lambda_0=\ell^{p(x)-1}$, for constant case $p(x)=p$ see  \cite{DP-2D}.
\end{itemize}

In order to formulate the general result it is convenient to introduce the following classes $\mathcal P_r(x_0, M)$ of 
weak solutions: 
\begin{defn}
We say that $u\in \P_r(x_0, M)$ if 
\begin{itemize}
\item[(a)] $u\in C^{0, 1}(B_r(x_0))$ and $\sup_r|\na u|\le M$,
\item[(b)] $u\ge 0$ and $x_0\in \fb u$, 
\item[(c)] $u$ is a weak solution in $B_r(x_0)$.
\end{itemize}
We say that $u\in \P_r(x_0, M, \theta)$ if $u\in \P_r(x_0, M)$ and 
\[ \frac{|\{u=0\}\cap B_\rho(x)|}{|B_\rho(x)|}\geq \theta>0,\]
for any $B_\rho(x)\subset B_r(x_0), x\in \fb u$.
\end{defn}
Note that if $u\in P_r(x_0, M)$ and $u_s(x)=\frac{u(x_0+sx)}{s}$ then 
$u_s\in \P_{r/s}(0, M)$. Moreover, in view of Proposition \ref{prop-closed} (see Appendix)
if $u\in P_r(x_0, M, \theta)$ and $u_{s_k}\to u_0$ locally uniformly (for some $0<s_k\downarrow 0$) 
then $u_0\in \P_\infty(0, M, \theta)$. Therefore $\bf (H3)$ is valid for 
$\P_r(x_0, M, \theta)$. 

From the proof of Theorem \ref{thm-1} we have the following generalization.

\begin{theorem}
Let $u\in \P_1(x_0, M, \theta)$ such that 
the hypotheses {\bf (H1)-(H3)} are satisfied, then near $x_0$ the free 
boundary is a
continuously differentiable curve provided that $\po u$ has finite connectivity.  
\end{theorem}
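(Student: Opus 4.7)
The plan is to mirror the two-step structure of the proof of Theorem \ref{thm-1}, replacing every appeal to specific $p$-Laplacian theory by the corresponding hypothesis in $\bf(H1)$--$\bf(H3)$. First I would reprove the analogue of Lemma \ref{lem-ddt}: any component $\gamma \subset \fb u$ with $\H^1(\gamma) > 0$ is smooth. Given $x_0 \in \gamma$, I take $u_k(x) = u(x_0 + r_k x)/r_k$ with $r_k \downarrow 0$; the definition of $\P_r(x_0,M,\theta)$ gives uniform Lipschitz bounds, so a subsequence converges locally uniformly and in $W^{1,F}_{\rm loc}$ to a limit $u_0$. By $\bf(H3)$, $u_0$ is again a weak solution, and by $\bf(H1)$ it satisfies $|\na u_0| \le \ell$ everywhere. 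The one-sided variation-of-arclength computation from Lemma \ref{lem-ddt} applies verbatim, since it only uses $\L u_0 = 0$ in $\po{u_0}$, the free boundary condition $|\na u_0| = \ell$ on $\fbr{u_0}$, and the gradient bound just noted. Hence each rectifiable component of $\fbr{u_0}$ has nonnegative outer curvature viewed from the positive side.

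Next I rule out a relative boundary of any such convex arc $\gamma_{k_0}$. Blowing up again at $y_0 \in \partial \gamma_{k_0}$ using $\bf(H3)$ produces a weak solution $u_{00}$ whose free boundary contains a half-line, along which $|\na u_{00}| = \ell$ by the gradient upper-semicontinuity in $\bf(H1)$. Extending $u_{00}$ linearly across this half-line gives a function satisfying $\L \widetilde u_{00} = 0$ in a neighborhood of the axis, and the unique continuation hypothesis $\bf(H2)$ forces $u_{00}(x) = \ell x_1^+$. Thus $u_0$ is flat at $y_0$, so $\fbr{u_0}$ is a smooth, convex, complete curve. Pulling this flatness back to $u_k$ via the Hausdorff convergence of the free boundaries and invoking the flatness implies regularity result (Theorem 9.3--9.4 of \cite{Wolan-adv}), which is stated for operators in this class, yields that $\fb u$ is $C^{1,\alpha}$ near $x_0$.

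The second step is the analogue of the non-thinning Lemma. If $\fb u$ were thinning at $x_0$ then every nontrivial component $\gamma$ of $\fb{u_0}$ is a closed smooth strictly convex curve bounding a domain $D = \mathrm{Hull}(\gamma)$. I would replace the explicit $p$-capacitary barriers $w_z$ by the $\L$-capacitary potential of $D$ relative to $\R^2$: by the exterior regularity for $\L$ guaranteed by $\bf(H1)$ together with Perron's method, there exists $U > 0$ in $\R^2\setminus D$ with $\L U = 0$ there and $|\na U| = \ell$ on $\gamma$. The comparison construction from Lemma 1.4, taking $\inf(w_z, U)$, goes through once radial supersolutions $w_z$ of $\L$ with the correct slope on $\partial B_\rho(z_0)$ and decay at infinity are exhibited; these are furnished by the one-dimensional ODE coming from the radial reduction of $\L$ under the structural conditions of $\bf(H1)$. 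Since $\L$ is uniformly elliptic near $\gamma$ (where $|\na u_0| = \ell \neq 0$), the Müller-type one-sided continuation of both $u_0$ and $U$ across $\gamma$ produces solutions $\hat u_0$ and $\hat U$ of $\L = 0$ agreeing to first order on $\gamma$; the unique continuation in $\bf(H2)$ then forces $\hat U \equiv \hat u_0$ in a neighborhood of $\gamma$ and hence $U \equiv u_0$ on $\R^2 \setminus D$, contradicting the compactness of $D$.

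The main obstacle I anticipate is the barrier construction in the last step: the explicit formula for $w_z$ exploited the radial form of the $p$-Laplacian fundamental solution, and in the general setting one has to produce radial supersolutions of $\L$ with the required boundary slope $\ell$ and with gradient tending to zero at infinity. This amounts to analyzing an ODE of the form $(\rho(\phi')\phi')' + r^{-1}\rho(\phi')\phi' = 0$ in two dimensions; under the growth/ellipticity encoded in $\bf(H1)$ such radial solutions exist and decay in gradient, but checking this decay and the validity of the one-sided Müller continuation are the places where the structural conditions on $\rho$ must actually be used, rather than just $\bf(H1)$--$\bf(H3)$ as black boxes.
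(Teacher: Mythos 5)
Your proposal follows essentially the same route as the paper, whose entire argument for this theorem is the remark that one repeats the proof of Theorem \ref{thm-1} with the $p$-Laplacian-specific ingredients (gradient bound and upper semicontinuity, unique continuation, closure under blow-up via Proposition \ref{prop-closed}) replaced by the hypotheses $\bf(H1)$--$\bf(H3)$ and the flatness-implies-regularity theory of \cite{Wolan-adv}. In fact you are more explicit than the paper: the radial barrier construction for general $\L$ that you flag as the delicate point is exactly the step the paper leaves implicit in its ``repeating the proof'' statement, and your proposed resolution via the radial ODE and the $\L$-capacitary potential is consistent with the parenthetical alternative already suggested in the paper's proof of the non-thinning lemma.
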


\section{Appendix}
In this section we prove that $\P_r(x_0, M, \theta)$ is closed with respect to the blow-up procedure.
We choose to state the result in $\R^n$ and adapt the proof from \cite{Weiss}.

\begin{prop}\label{prop-closed}
Let  $u$ be a weak solution of $\L u=\lambda_0\H^{n-1}\with\partial\{u>0\}$ 
in the sense of Definition \ref{weak-def}. If $u\in \P_r(x_0,M, \theta)$ such that the blow-up  sequence $u_{k}(x)=\frac{u(x_0+\rho_kx)}{\rho_k}$ converges
locally uniformly to $u_0$, then $u_0\in \P_R(0, M, \theta)$ for any $R>0$. 
\end{prop}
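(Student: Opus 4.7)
The plan is to verify each clause of the definition of $\P_R(0,M,\theta)$ for $u_0$ by passing to the limit in the corresponding estimate for $u_k(x)=u(x_0+\rho_k x)/\rho_k$. The basic tools are: strong $W^{1,F}_{\rm loc}$ convergence $u_k\to u_0$ (which, together with locally uniform convergence, is supplied by {\bf (H1)} and interior $C^{1,\alpha}$ theory), Hausdorff convergence $\fb{u_k}\to\fb{u_0}$, and $L^1_{\rm loc}$ convergence $\chi_{\po{u_k}}\to\chi_{\po{u_0}}$, all of which are consequences of the uniform Lipschitz bound combined with the uniform non-degeneracy and the $\theta$-porosity baked into the class $\P_r(x_0,M,\theta)$.

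I would first dispose of the routine properties. The Lipschitz bound $|\na u_0|\le M$ and non-negativity of $u_0$ are inherited directly from $u_k$; $u_k(0)=0$ gives $u_0(0)=0$, and passing condition 2) of Definition~\ref{weak-def} to the limit on spheres (justified by locally uniform convergence) both establishes that $0\in\fb{u_0}$ and delivers condition 2) for $u_0$ itself. That $\L u_0=0$ in $\po{u_0}$ is obtained by noting that on every compact $K\Subset\po{u_0}$ one has $u_k>0$ eventually, so $\L u_k=0$ there, and strong gradient convergence lets me pass to the limit in the weak formulation $\int\rho(\na u_k)\na u_k\cdot\na\varphi=0$. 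For the $\theta$-porosity at a point $y\in\fb{u_0}\cap B_R$ with $B_\rho(y)\Subset B_R$, I pick via Hausdorff convergence a sequence $y_k\in\fb{u_k}$ with $y_k\to y$, apply the density hypothesis at $y_k$ to $B_\rho(y_k)$, and take the $L^1_{\rm loc}$ limit of $\chi_{\{u_k=0\}}$ to conclude $|\{u_0=0\}\cap B_\rho(y)|\ge\theta\,|B_\rho(y)|$.

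The main obstacle is part 3) of Definition~\ref{weak-def}, the measure identity $\L u_0=\lambda_0\H^{n-1}\with\fbr{u_0}$. Writing the weak identity for $u_k$,
\begin{equation*}
-\int \rho(\na u_k)\na u_k\cdot\na\zeta \;=\; \lambda_0 \int_{\fbr{u_k}}\zeta\,d\H^{n-1},
\end{equation*}
and using strong convergence of $\rho(\na u_k)\na u_k$ in $L^1_{\rm loc}$, we obtain that the perimeter measures $\mu_k:=\H^{n-1}\with\fbr{u_k}$ converge in the weak-$*$ sense of Radon measures to some $\mu$ characterised by $-\int\rho(\na u_0)\na u_0\cdot\na\zeta=\lambda_0\int\zeta\,d\mu$ for all $\zeta\in C_0^\infty$. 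It then remains to identify $\mu$ with $\H^{n-1}\with\fbr{u_0}$. Hausdorff convergence of the free boundaries forces $\supp\mu\subset\fb{u_0}$, and $L^1$ convergence of $\chi_{\po{u_k}}$ combined with lower semicontinuity of the perimeter gives $\mu\ge \H^{n-1}\with\fbr{u_0}$. The hard part is the reverse inequality, together with ruling out mass of $\mu$ on $\fb{u_0}\setminus\fbr{u_0}$; this requires an $\H^{n-1}$-a.e.\ analysis at reduced-boundary points, where the $\theta$-density hypothesis together with the uniform non-degeneracy forces the rescaled positivity sets to converge to half-spaces with unit perimeter density, so that the surface-measure density of $\mu$ on $\fbr{u_0}$ is exactly $1$. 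This final identification is the crux of the proof and is the place where the uniform lower density $\theta$ is indispensable; the argument adapts the corresponding one from \cite{Weiss} after substituting the Lipschitz and non-degeneracy bounds appropriate to the operator $\L$.
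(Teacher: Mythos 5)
Your treatment of the routine clauses (Lipschitz bound, non-negativity, $0\in\partial\{u_0>0\}$, non-degeneracy, $\L u_0=0$ in the positivity set, and the $\theta$-density via Hausdorff plus $L^1_{\rm loc}$ convergence) matches Step 1 of the paper and is fine. The gap is in clause 3) of Definition \ref{weak-def}, which is the entire content of the proposition: you reduce it to identifying the weak-$*$ limit $\mu$ of $\lambda_0\H^{n-1}$ restricted to $\partial_{\rm red}\{u_k>0\}$ with $\lambda_0\H^{n-1}$ restricted to $\partial_{\rm red}\{u_0>0\}$, correctly call this the crux, and then do not prove it — you defer it to ``adapting \cite{Weiss}''. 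Worse, the mechanism you propose for the density-one statement does not do the job: at an $\H^{n-1}$-a.e.\ point of $\partial_{\rm red}\{u_0>0\}$ the blow-ups of the \emph{limit set} $\{u_0>0\}$ converge to a half-space with unit perimeter density, but that controls the perimeter measure of $u_0$, not the limit $\mu$ of the perimeter measures of $u_k$; a priori $\mu$ can carry multiplicity (collapsing sheets of $\partial\{u_k>0\}$) and can charge the singular set $\partial\{u_0>0\}\setminus\partial_{\rm red}\{u_0>0\}$, whose $\H^{n-1}$-negligibility you never establish. Nothing in your sketch excludes either phenomenon.

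The paper supplies exactly these missing steps. First, a uniform perimeter bound for $\{u_k>0\}$ (testing 3) with a cut-off) gives finite perimeter for $\{u_0>0\}$. Second, by Federer's structure theorem the singular set splits as $K_+\cup K_0$ with $\H^{n-1}(K_+)=0$, and $K_0=\emptyset$ is proved by a \emph{second} blow-up at a point of $K_0$: the vanishing density of the reduced boundary forces the limit's characteristic function to be constant a.e., and the $\theta$-density of the zero set together with the strong maximum principle would force that limit to vanish, contradicting the non-degeneracy 2); hence $\H^{n-1}(\partial\{u_0>0\}\setminus\partial_{\rm red}\{u_0>0\})=0$. Third, smoothness of $\partial_{\rm red}\{u_0>0\}$ is obtained not by analysing $u_0$ directly (which is not yet known to be a weak solution) but by transferring flatness to the $u_k$ — which \emph{are} weak solutions — and invoking the flatness-implies-regularity theorem of \cite{Wolan-adv} with uniform $C^{1,\alpha}$ bounds that pass to the limit; this also yields $|\na u_0|=\ell$ classically on $\partial_{\rm red}\{u_0>0\}$. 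Finally, the identity in 3) is verified by a direct integration by parts after removing a $\delta$-covering of the singular set with $\sum t_i^{n-1}<\delta$, using $\lambda_0=g(\ell)$ on the smooth reduced boundary and letting $\delta\to0$. Unless you carry out analogues of these three steps (negligible singular set, exclusion of multiplicity via regularity of the approximating free boundaries, and the limiting integration by parts), your identification of $\mu$ remains an assertion, not a proof.
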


\begin{proof} 
{\bf Step 1)}
Let $u_0$ be a blow-up limit, i.e.  let 
$u_k(x) =\frac{u(x_0+\rho_k x)}{\rho_k}$, $ x_0\in\partial\{u>0\}$ for some sequence 
$\rho_k\searrow 0, k\rightarrow \infty$. Then by a customary 
compactness argument \cite{Wolan-adv} Lemma 7.1 and Remark 8.2 we can extract a subsequence, 
still denoted by $\rho_k$, such that $u_k\to u$ locally uniformly. More precisely, we have that 
 \begin{eqnarray}\label{AHD0}
   u_k\rightarrow u_0 \qquad \textrm{ strongly in} \ W^{1, F}_{\rm loc}(\R^n) \textrm{ and } C^\alpha_{\rm loc}(\R^n), \forall \alpha\in (0,1) \
\textrm{as}\ k\rightarrow \infty,\\\label{AHD}
   \partial \{ u_k >0 \} \rightarrow \partial \{ u_0 >0 \}\qquad \textrm{in Hausdorff distance}\  d_\H\textrm { locally in}\   \R^n, \\\label{AHD1}
   \I{u_k}\rightarrow \I{u_0}\qquad  \textrm{in}\  L^1_{\rm loc}(\R^n).
  \end{eqnarray}
is a weak solution. Consequently, it follows that 
properties 1) and 2) in Definition \ref{weak-def} for $u_0$ 
hold true. Furthermore, 
$$\displaystyle\frac{|B_r(x))\cap \{u_0=0\}|}{|B_r(x)|}\geq \theta, \qquad x\in\partial \{u_0>0\},\qquad  B_r(x)\subset \R^n.$$

To show that $u_0$ is weak solution
it remains to verify the equation 3) in the Definition \ref{weak-def}.
We need to show two things: 
$\H^{n-1}(\partial\{u_0>0\}\setminus \partial_{\rm red}\{u_0>0\})=0$
and the smoothness of $\partial_{\rm red}\{u_0>0\}$ stated in 
the Remark \ref{weak-props} $\bf 1^\circ$-$\bf 3^\circ$.

{\bf Step 2)} Next,  we prove that $\{u_0>0\}$ is of finite perimeter. Take $\zeta(x)=\max(0,\min(1, \frac 1\e(R-|x|)) )$
in 3) of Definition \ref{weak-def} to conclude, after sending $\e$ to zero, that for a.e. $R>0$

\begin{eqnarray*}
 \H^{n-1}(\partial_{\rm red}\{u_k>0\}\cap B_{R}(0))&=&
\rho^{n-1}_k\H^{n-1}(\partial_{\rm red}\{u>0\}\cap B_{R\rho_k}(x_0))\\\nonumber
&=&\lambda_0
\int_{\partial B_{\rho_kR}(x_0)}g(|\na u|)\frac{\na u}{|\na u|}\cdot \nu
\H^{n-1}\\\nonumber
&\leq& C.
\end{eqnarray*}
From here the claim follows from the semi-continuity of the perimeter.

\smallskip 

Since the current boundary $T=\partial(\R^n\with \{u_0>0\}\cap B_R(0))$ is 
representable by integration, $\|T\|=\int_{B_R(0)} |D\I{u_0}|$, we get
from 4.5.6. (3) \cite{Federer} $\partial\{u_0>0\}\setminus \partial_{\rm red}\{u_0>0\}=K_0\cup K_+ $ where
$\H^{n-1}(K_+)=0$ and for 
$x_1\in K_0$,  $r^{1-n}\H^{n-1}(\partial_{\rm red}\{u_0>0\}\cap B_r(x_1))\rightarrow 0$ as $r\rightarrow 0$.

Let us show that $K_0=\emptyset$.
For $u_{0k}(x)=\frac{u_0(x_1+r_k x)}{r_k}$, 
by compactness argument as above, we have $u_{0k}\rightarrow u_{1}$ 
for another function  
$u_1$ 
and
\begin{eqnarray*}
 \int_{B_R}\I{u_1}\div\phi\longleftarrow\int_{B_R}\I{u_{0k}}\div \phi=r^{1-n}_k\int_{B_{Rr_k}(x_1)}\I{u_0}\div\phi\left(\frac{x-x_1}{r_k}\right).\\\nonumber
\end{eqnarray*}
On the other hand 
\begin{eqnarray*}
\left|r^{1-n}_k\int_{B_{Rr_k}(x_1)}\I{u_0}\div\phi\left(\frac{x-x_1}{r_k}\right)\right|
=
\left|r^{1-n}_k\int_{\partial_{\rm red }\{u_0>0\}\cap B_{Rr_k}(x_1)}\nu\cdot \phi\left(\frac{x-x_1}{r_k}\right)\right|\\
\leq
\sup |\phi | r_k^{1-n}\H^{n-1}(\partial_{\rm red }\{u_0>0\}\cap B_{Rr_k}(x_1))\longrightarrow 0\quad {\rm{as}}\quad k\rightarrow \infty.
\end{eqnarray*}

Hence we infer that $\I{u_1}$ is a function of bounded variation which is constant a.e. in $B_R$.
The positive Lebesgue density property of $\{u_1=0\}$, translated to $u_1$ 
through compactness as in step {\bf 1)}, and strong maximum principle for the solutions of $\L u=0$ (see Hypothesis {\bf(H1)})
demands $u_1$ to be zero. This is in contradiction with  2) in Definition \ref{weak-def} 
since by compactness as in step {\bf 1)} the non-degeneracy  translates  to $u_1$.
Thus $K_0=\emptyset$ and we obtain $\H^{n-1}(\partial\{u_0>0\}\setminus \partial_{\rm red}\{u_0>0\})=0$.

\smallskip

To show that $\partial_{\rm red } \{u_0>0\}$ is smooth, we notice 
that if $z_0\in\partial_{\rm red }\{u_0>0\}$ and $\nu(z_0)$
is the normal at $z_0$ in the sense of 4.5.5.  \cite{Federer}, 
then by the uniform Lebesgue density of $\{ u_0=0\}$ and nondegeneracy
of $u_0$ from 2) of Definition \ref{weak-def} we have 
$u_0\in F(\frac\sigma 2, 1; \infty)$ in $B_{2\rho}(z_0)$ in direction $e=\nu(z_0)$
for $\sigma\leq \sigma_0$ and $\rho\leq \tau_0\sigma^{\frac 2\beta}$.
Here $F$ is the flatness class
defined as in \cite{Wolan-adv} definition 9.1.

At this point we don't know if $u_0$ is a weak solution
 so we cannot immediately apply the ''flatness implies regularity`` to $u_0$.
However from (\ref{AHD}) we conclude that $u_k$, 
which is a weak solution, is in $F(\sigma, 1; \infty)$ in $B_\rho(z_0)$ in the
direction of $e$ for sufficiently large $k$ (this is because $z_0\in \fbr{u_0}$ and $e$
is the normal at $z_0$ in measure theoretic sense). 
{Therefore, the surfaces $\fb{u_k}$ are all graphs of $C^{1, \alpha}$ functions in direction $e$ with uniform 
bounds so that we get the same property for $\fb{u_0}$}
Thus $\partial\{u_k>0\}$ are $C^3$ smooth in $B_{\frac \rho 4}(z_0)$ in the direction of $e$
and this translates to $\partial \{u_0>0\}$ in $B_{\frac \rho 4}(z_0)$.

\medskip

{ \bf Step 3)}
Now we can finally show that $u_0$ satisfies the equation in 3) of Definition \ref{weak-def}.
Take a compactly supported smooth function $\zeta $ and fix a $\delta>0$ small. 
Let $\mathscr F_1$ be a finite subcovering of
$\supp\zeta \cap(\partial\{u_0>0\}\setminus \partial_{\rm red }\{u_0>0\})$ 
by balls $B_{t_i}(y_i)$ such that
$\displaystyle\sum_{i=1}^{N'(\delta)} t_i^{n-1}<\delta$, see step {\bf 2)}. 
Then using partial integration we get

\begin{eqnarray*}
 \int g(|\na u_0|)\frac{\na u_0}{|\na u_0|}\na \zeta&=&
\int_{\mathscr F_1} g(|\na u_0|)\frac{\na u_0}{|\na u_0|}\na \zeta+\int_{\supp\zeta\setminus \mathscr F_1} g(|\na u_0|)\frac{\na u_0}{|\na u_0|}\na \zeta\\\nonumber
&=&\int_{\mathscr F_1} g(|\na u_0|)\frac{\na u_0}{|\na u_0|}\na \zeta-\int_{\supp\zeta\setminus \mathscr F_1\cap \{u_0>0\}}\zeta\L u_0\\
&&+\int_{\fbr{u_0}\setminus \mathscr F_1}\zeta  \frac{g(|\na u_0|)}{|\na u_0|}\frac{\partial u_0}{\partial \nu}\\\nonumber
&=&o_\delta(1)-g(\ell)\int_{\partial(\supp\zeta\setminus\mathscr F_1)}\zeta=o_\delta(1)-
\lambda_0\int_{\partial(\supp\zeta\setminus\mathscr F_1)}\zeta.
\end{eqnarray*}
To get the last line we used the definition of $\lambda_0=g(\ell)$
and that by step {\bf 2)} $\fbr {u_0}$ is smooth, hence, the free boundary condition $|\na u_0|=\ell$
holds in the classical sense.
Sending $\delta\rightarrow 0$ we conclude that $u_0$ is a weak solution.
 \end{proof}

\begin{bibdiv}
\begin{biblist}
\bib{AC}{article}{
   author={Alt, H. W.},
   author={Caffarelli, L. A.},
   title={Existence and regularity for a minimum problem with free boundary},
   journal={J. Reine Angew. Math.},
   volume={325},
   date={1981},
   pages={105--144},
   issn={0075-4102},
   review={\MR{618549}},
}

\bib{ACF-Q}{article}{
   author={Alt, Hans Wilhelm},
   author={Caffarelli, Luis A.},
   author={Friedman, Avner},
   title={A free boundary problem for quasilinear elliptic equations},
   journal={Ann. Scuola Norm. Sup. Pisa Cl. Sci. (4)},
   volume={11},
   date={1984},
   number={1},
   pages={1--44},
   issn={0391-173X},
   review={\MR{752578}},
}

\bib{CJK}{article}{
   author={Caffarelli, Luis A.},
   author={Jerison, David},
   author={Kenig, Carlos E.},
   title={Global energy minimizers for free boundary problems and full
   regularity in three dimensions},
   conference={
      title={Noncompact problems at the intersection of geometry, analysis,
      and topology},
   },
   book={
      series={Contemp. Math.},
      volume={350},
      publisher={Amer. Math. Soc., Providence, RI},
   },
   date={2004},
   pages={83--97},
   review={\MR{2082392}},
   doi={10.1090/conm/350/06339},
}

\bib{Baker}{article}{ 
title={Structure of a linear array of hollow vortices of finite cross-section}, 
volume={74}, 
DOI={10.1017/S0022112076001894}, 
number={3}, 
journal={Journal of Fluid Mechanics}, 
publisher={Cambridge University Press}, 
author={Baker, G. R. }, 
author={Saffman, P. G.},  
author={Sheffield, J. S.}, 
year={1976}, 
pages={469-476}
}

\bib{Wolan}{article}{
   author={Bonder, Juli\'{a}n Fern\'{a}ndez},
   author={Mart\'{\i}nez, Sandra},
   author={Wolanski, Noemi},
   title={A free boundary problem for the $p(x)$-Laplacian},
   journal={Nonlinear Anal.},
   volume={72},
   date={2010},
   number={2},
   pages={1078--1103},
   issn={0362-546X},
   review={\MR{2579371}},
   doi={10.1016/j.na.2009.07.048},
}

\bib{Cro}{article}{
   author = {Crowdy, D. G.}, 
   author={Green, C. C.},
    title = {Analytical solutions for von K{\'a}rm{\'a}n streets of hollow vortices},
  journal = {Physics of Fluids},
     year = {2011},
    month = {dec},
   volume = {23},
   number = {12},
      eid = {126602-126602-11},
    pages = {126602-126602-11},
      doi = {10.1063/1.3665102},
}

\bib{DP-2D}{article}{
   author={Danielli, Donatella},
   author={Petrosyan, Arshak},
   title={Full regularity of the free boundary in a Bernoulli-type problem
   in two dimensions},
   journal={Math. Res. Lett.},
   volume={13},
   date={2006},
   number={4},
   pages={667--681},
   issn={1073-2780},
   review={\MR{2250499}},
   doi={10.4310/MRL.2006.v13.n4.a14},
}
\bib{deGuzman}{book}{
   author={de Guzm\'{a}n, Miguel},
   title={Differentiation of integrals in $R^{n}$},
   series={Lecture Notes in Mathematics, Vol. 481},
   note={With appendices by Antonio C\'{o}rdoba, and Robert Fefferman, and two
   by Roberto Moriy\'{o}n},
   publisher={Springer-Verlag, Berlin-New York},
   date={1975},
   pages={xii+266},
   review={\MR{0457661}},
}

\bib{Federer}{book}{
   author={Federer, Herbert},
   title={Geometric measure theory},
   series={Die Grundlehren der mathematischen Wissenschaften, Band 153},
   publisher={Springer-Verlag New York Inc., New York},
   date={1969},
   pages={xiv+676},
   review={\MR{0257325}},
}

\bib{GM}{article}{
   author={Granlund, Seppo},
   author={Marola, Niko},
   title={On the problem of unique continuation for the $p$-Laplace
   equation},
   journal={Nonlinear Anal.},
   volume={101},
   date={2014},
   pages={89--97},
   issn={0362-546X},
   review={\MR{3178380}},
   doi={10.1016/j.na.2014.01.020},
}

\bib{K18}{article}{
author={Karakhanyan,  A. L.}, 
title={Full and partial regularity for a class of nonlinear free boundary problems},
journal={preprint,  arXiv:1811.07620},
volume={},
date={2018},
}

\bib{Wolan-adv}{article}{
   author={Mart\'{\i}nez, Sandra},
   author={Wolanski, Noemi},
   title={A minimum problem with free boundary in Orlicz spaces},
   journal={Adv. Math.},
   volume={218},
   date={2008},
   number={6},
   pages={1914--1971},
   issn={0001-8708},
   review={\MR{2431665}},
   doi={10.1016/j.aim.2008.03.028},
}
\bib{Muller}{article}{
   author={M\"{u}ller, Frank},
   title={On the continuation of solutions for elliptic equations in two
   variables},
   language={English, with English and French summaries},
   journal={Ann. Inst. H. Poincar\'{e} Anal. Non Lin\'{e}aire},
   volume={19},
   date={2002},
   number={6},
   pages={745--776},
   issn={0294-1449},
   review={\MR{1939085}},
   doi={10.1016/S0294-1449(02)00100-2},
}

\bib{Simon}{book}{
   author={Simon, Leon},
   title={Lectures on geometric measure theory},
   series={Proceedings of the Centre for Mathematical Analysis, Australian
   National University},
   volume={3},
   publisher={Australian National University, Centre for Mathematical
   Analysis, Canberra},
   date={1983},
   pages={vii+272},
   isbn={0-86784-429-9},
   review={\MR{756417}},
}

\bib{T}{article}{
   author={Traizet, Martin},
   title={Classification of the solutions to an overdetermined elliptic
   problem in the plane},
   journal={Geom. Funct. Anal.},
   volume={24},
   date={2014},
   number={2},
   pages={690--720},
   issn={1016-443X},
   review={\MR{3192039}},
}

\bib{Weiss}{article}{
   author={Weiss, Georg S.},
   title={Partial regularity for weak solutions of an elliptic free boundary
   problem},
   journal={Comm. Partial Differential Equations},
   volume={23},
   date={1998},
   number={3-4},
   pages={439--455},
   issn={0360-5302},
   review={\MR{1620644}},
   doi={10.1080/03605309808821352},
}

\bib{Weiss-95}{article}{
   author={Weiss, Georg S.},
   title={A free boundary problem for non-radial-symmetric quasi-linear
   elliptic equations},
   journal={Adv. Math. Sci. Appl.},
   volume={5},
   date={1995},
   number={2},
   pages={497--555},
   issn={1343-4373},
   review={\MR{1361003}},
}

\end{biblist}
\end{bibdiv}
\end{document}